\newtheorem{theorem}{Theorem}
\theoremstyle{plain}
\newtheorem{definition}{Definition}
\newtheorem{lemma}{Lemma}
\newtheorem{proposition}{Proposition}
\newtheorem{remark}{Remark}
\numberwithin{equation}{section}
\def\Xint#1{\mathchoice
   {\XXint\displaystyle\textstyle{#1}}%
   {\XXint\textstyle\scriptstyle{#1}}%
   {\XXint\scriptstyle\scriptscriptstyle{#1}}%
   {\XXint\scriptscriptstyle\scriptscriptstyle{#1}}%
   \!\int}
\def\XXint#1#2#3{{\setbox0=\hbox{$#1{#2#3}{\int}$}
     \vcenter{\hbox{$#2#3$}}\kern-.5\wd0}}
\def\dashint{\Xint-}
\newcommand{\smu}{{\mbox{\scriptsize$\mu$}}}
\newcommand{\dd}{\; \mathrm{d}}
\begin{document}
\title[Weak Gradients]{The $p$-Weak Gradient depends on $p$}
\author{Simone Di Marino}
\email[Simone Di Marino]{simone.dimarino@sns.it}
\author{Gareth Speight}
\email[Gareth Speight]{gareth.speight@sns.it}

\begin{abstract}
Given $\alpha>0$, we construct a weighted Lebesgue measure on $\mathbb{R}^{n}$ for which the family of non constant curves has $p$-modulus zero for $p\leq 1+\alpha$ but the weight is a Muckenhoupt $A_p$ weight for $p>1+\alpha$. In particular, the $p$-weak gradient is trivial for small $p$ but non trivial for large $p$. This answers an open question posed by several authors. We also give a full description of the $p$-weak gradient for any locally finite Borel measure on $\mathbb{R}$.
\end{abstract}
\maketitle

\section{Introduction}

Generalizations of Sobolev spaces to metric measure spaces is an important area of recent research \cite{HK00}, \cite{H07}. There are several different characterizations of classical Sobolev spaces that can be generalized to metric measure spaces; some, but not all, of these choices give rise to equivalent spaces. 

Typically, when defining a Sobolev space $W^{1,p}(X,d,m)$ on a metric measure space, a $p$-weak gradient $|\nabla f|_{m,p}$ of a Sobolev function $f\colon X \to \mathbb{R}$ is identified. In \cite{KM98} a $p$-weak gradient was defined as a minimal $p$-upper gradient (based on inequality in the Fundamental Theorem of Calculus along $p$-almost every curve, see Definition \ref{weakgradient}) while in \cite{AGS13} a $p$-weak gradient was defined by using relaxations of the slope in $L^{p}$. Remarkably, despite the fact that relaxation of slope does not explicitly involve curves, these definitions were shown to be equivalent in \cite{AGS13}.

The $p$-weak gradient agrees with the absolute value of the gradient for the classical case and, more generally, with the slope for Lipschitz functions defined on complete doubling metric measure spaces satisfying a weak $p$-Poincar\'{e} inequality. In the case of a general metric measure space the weak gradient is more subtle; an example suggested by P. Koskela shows that $|\nabla f|_{m,p} \in L^{q}(X,m)$ with $q>p$ does not imply that $|\nabla f|_{m,q}$ exists as a function in $L^{q}(X,m)$ \cite{AGS13}. Further, it was not clear whether, for a function in $W^{1,p}(X,d,m)\cap W^{1,q}(X,d,m)$ with $p\neq q$, the $p$-weak gradient and $q$-weak gradient agree (open problems 2.49, 2.53 \cite{BB11}, \cite{ACM13}, \cite{AGS13}).

We answer this question with the following theorem; we show that (even for a relatively nice measure on $\mathbb{R}^{n}$) the $p$-weak gradient may be trivial for small $p$ but non trivial for large $p$. We denote the $p$-modulus on absolutely continuous curves in a metric measure space $(X,d,m)$ by $\mathrm{Mod}_{p,m}$ (see Definition \ref{pmodulus}) and write $\mathbb{R}^{+}=[0,\infty)$.

\begin{theorem}\label{mainthm}
Let $n\in \mathbb{N}$ and $\alpha>0$. Then there exists a Borel function $w\colon \mathbb{R}^{n} \to \mathbb{R}^{+}$ such that the measure $\mu:=w\mathcal{L}^{n}$ is doubling and:
\begin{itemize}
	\item For $p\leq 1+\alpha$ we have $\mathrm{Mod}_{p,\mu}(\Gamma_{c})=0$ where $\Gamma_{c}$ is the family of non constant absolutely continuous curves in $\mathbb{R}^{n}$. This implies that the $p$-weak gradient on $(\mathbb{R}^{n},|\cdot|,\mu)$ is identically zero for every function.
	\item For $p>1+\alpha$ the function $w$ is a Muckenhoupt $A_{p}$-weight. This implies that a weak $p$-Poincar\'{e} inequality holds; it follows that the $p$-weak gradient on $(\mathbb{R}^{n},|\cdot|,\mu)$ agrees with the slope for Lipschitz functions.
\end{itemize}
\end{theorem}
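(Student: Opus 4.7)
The plan is to construct $w$ by a multi-scale hierarchical procedure in which the weight is made to vanish to order $\alpha$ on a carefully arranged self-similar family of codimension-one ``barrier'' neighborhoods. Concretely, at dyadic scales $r_k=2^{-k}$ one takes a sparse, evenly distributed family of thin tubular neighborhoods of affine hyperplanes, and on each such barrier $w(x)$ is modelled on $d(x,H)^{\alpha}$; off the union of all barriers one simply sets $w\equiv 1$. The codimension-one choice is what places the Muckenhoupt threshold precisely at $p=1+\alpha$, while the hierarchical placement ensures that every non constant absolutely continuous curve is crossed transversally by barriers at infinitely many arbitrarily fine scales, and conversely that any fixed ball meets only a controlled number of scales nontrivially.

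For the first bullet I would appeal to the standard characterization: $\mathrm{Mod}_{p,\mu}(\Gamma_c)=0$ iff there is a Borel function $\rho\in L^{p}(\mu)$ with $\int_{\gamma}\rho\dd s=+\infty$ for every $\gamma\in\Gamma_c$. A natural candidate is $\rho=\sum_k a_k\chi_{B_k}w^{-1/p}$ with $B_k$ the union of scale-$k$ barrier neighborhoods and $a_k$ coefficients to be chosen. When the $B_k$ are essentially disjoint, $\int \rho^{p}\dd\mu\sim\sum_k a_k^{p}\,\mathrm{vol}(B_k)$, while each transverse crossing of a scale-$k$ barrier contributes $\gtrsim a_k r_k^{1-\alpha/p}$ to the arc-length integral. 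Since any non constant curve $\gamma$ produces of order $\mathrm{length}(\gamma)/r_k$ crossings at scale $k$, the line integral becomes $\sim\mathrm{length}(\gamma)\sum_k a_k r_k^{-\alpha/p}$; the borderline choice of $a_k$ keeping $\sum_k a_k^{p}\mathrm{vol}(B_k)$ finite makes this divergent exactly when $p\le 1+\alpha$.

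For the second bullet, when $p>1+\alpha$ I would verify the $A_p$ condition uniformly in $B$ by reducing to the single-hyperplane model
\[
\left(\dashint_B d(x,H)^{\alpha}\dd x\right)\left(\dashint_B d(x,H)^{-\alpha/(p-1)}\dd x\right)^{p-1}\lesssim 1,
\]
which holds precisely when $\alpha/(p-1)<1$, i.e.\ $p>1+\alpha$, and then summing the contributions from the finitely many scales that actually meet $B$. Doubling of $\mu$ follows analogously from the scale-invariant geometry. Once $w\in A_p$, the classical Muckenhoupt theory yields a weak $(1,p)$-Poincar\'e inequality on $(\mathbb{R}^{n},|\cdot|,\mu)$, and the Cheeger/Heinonen--Koskela machinery then identifies the $p$-weak gradient with the slope on Lipschitz functions.

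The main obstacle is the delicate calibration of the construction: the barrier thicknesses, spacings, and number of hyperplanes per scale must be chosen so that \emph{all three} desiderata---doubling of $\mu$, sharp vanishing of $\mathrm{Mod}_{p,\mu}(\Gamma_c)$ at $p=1+\alpha$, and uniform $A_p$ bounds for $p>1+\alpha$---hold simultaneously, without coarse-scale barriers spoiling fine-scale estimates or some very short non constant curve slipping through the barrier system with finite admissible $p$-energy.
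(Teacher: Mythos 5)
Your architecture is genuinely different from the paper's, and the difference is where the trouble lies. The paper does not use hyperplane barriers at dyadic scales: it builds $w=\inf_k w_k$ on $\mathbb{R}$ by inserting at the $k$-th rational $q_k$ a cusp $2w_{k-1}(q_k)|(x-q_k)/r_k|^{\alpha}$ with $r_k$ extremely small, and then sets $\widehat w(x_1,\dots,x_n)=\min_i w(x_i)$. Centring the cusps on a \emph{dense} set makes the modulus estimate almost automatic: with $\beta=1/\alpha$, a \emph{single} crossing of a \emph{single} cusp already gives $\int_\gamma w^{-\beta}=\infty$ (since $t^{-\alpha\beta}=t^{-1}$ is exactly non-integrable), every nonconstant curve has a coordinate projection covering an interval and hence a rational, and $\int w^{-\beta p}\,d\mu=\int w^{1-\beta p}$ is locally finite precisely for $p<1+\alpha$; the endpoint $p=1+\alpha$ needs the logarithmic gauge $w^{-\beta}|\log(w/r)|^{-1}$. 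Your mechanism instead makes each crossing contribute a \emph{finite} amount and asks divergence to come from accumulation over scales. That is not obviously unworkable, but your computation does not locate the threshold: taking your formulas at face value (per-crossing contribution $a_kr_k^{1-\alpha/p}$, about $\mathrm{length}/r_k$ crossings, energy $\sum_k a_k^p\mathrm{vol}(B_k)$ with $\mathrm{vol}(B_k)$ bounded), the choice $a_k=r_k^{\alpha/p}$ gives finite energy and divergent line integrals for \emph{every} $p\ge 1$, contradicting the $A_p$ property you need for large $p$. The inconsistency comes from conflating barrier thickness with barrier spacing; once they are separated, the threshold depends on their ratio, which you never fix, and the claim that the borderline $a_k$ yields divergence ``exactly when $p\le1+\alpha$'' is asserted rather than derived. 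You also need the scale-$k$ barriers to form a full grid of spacing $r_k$ (not a ``sparse'' subfamily, or curves dodge them), the crossing count is governed by the diameter of $\gamma$ rather than its length, and the sets $B_k$ are not disjoint across scales, which undermines the energy estimate when $\sum_k a_k=\infty$.

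The second genuine gap is in the $A_p$ verification: it is false that only ``finitely many scales actually meet $B$''. A ball of radius $\rho$ contains on the order of $\rho/r_k$ scale-$k$ barriers for \emph{every} $k$ with $r_k<\rho$, so infinitely many scales contribute to $\dashint_B w^{1/(1-p)}$, and controlling that infinite accumulation is the whole difficulty. The paper's solution is quantitative: for intervals larger than a buffer zone around $q_k$, the $k$-th insertion multiplies the $A_p$ ratio by at most $(1+\varepsilon_k)^p$ with $\prod_k(1+\varepsilon_k)<\infty$ (possible because the cusp lives on an interval of length $2r_k$ chosen $\varepsilon_k$-small relative to a region where $w_{k-1}$ is within a factor $2$ of a constant), while for small intervals one reduces to the single-cusp model $|x|^{\alpha}$, which is $A_p$ exactly because $\alpha/(p-1)<1$. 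In your setting the analogous requirement is a bound like $\sum_k \delta_k^{1-\alpha/(p-1)}/r_k<\infty$, uniformly over all $p>1+\alpha$; this forces the thicknesses $\delta_k$ to decay faster than any power of $r_k$ and must be reconciled with the calibration needed for the modulus estimate. Until both quantitative steps are carried out and shown to be compatible, the proposal remains a plausible blueprint rather than a proof.
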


The simple structure of curves in $\mathbb{R}$ gives rise to a simple description of the $p$-weak gradient with respect to each measure. In Theorem \ref{thm:char} we show that, for any locally finite Borel measure on $\mathbb{R}$ and $p>1$, the corresponding $p$-weak gradient of a Lipschitz function $f\colon \mathbb{R} \to \mathbb{R}$ is, at almost every point, equal to either zero or $|f'(x)|$. Roughly, the points where the $p$-weak gradient is non zero are those points which have a neighborhood that, when considered as a set containing a single curve, has positive $p$-modulus.

We now give some definitions and describe how the facts about weak gradients in Theorem \ref{mainthm} follow from the assertions about the measure. If $(X,d)$ is a metric space then a curve in $X$ is simply a continuous map $\gamma\colon I \to X$ where $I\subset \mathbb{R}$ is a closed interval; we denote the end points of a curve $\gamma$ by $a_{\gamma}$ and $b_{\gamma}$. In this paper we consider only absolutely continuous curves; a curve $\gamma\colon I \to X$ is absolutely continuous if there exists $g\colon I \to \mathbb{R}$ Lebesgue integrable such that
\[d(\gamma(s),\gamma(t))\leq \int_{s}^{t} g(r) \dd r\]
whenever $s, t \in I$ with $s<t$. If $\gamma$ is absolutely continuous then there is a minimal function $g$ with this property, called metric speed, which we denote by $|\dot{\gamma}|$. If $f\colon X \to \mathbb{R}$ is Borel then we may define
\[\int_{\gamma}f=\int_{0}^{1}(f(\gamma(s))) |\dot{\gamma}|(s) \dd s.\]

The $p$-modulus gives a way to measure the size of a family of curves in a metric measure space \cite{Hei01}. We consider only metric measure spaces $(X,d,m)$ for which $(X,d)$ is complete and separable with $m$ a $\sigma$-finite Borel measure on $X$.

\begin{definition}\label{pmodulus}
Let $(X,d,m)$ be a metric measure space and $p\geq 1$. The $p$-modulus $\mathrm{Mod}_{p,m}$ is an outer measure on the space of absolutely continuous curves in $X$ defined by
\[\mathrm{Mod}_{p,m}(\Gamma)=\inf \int_{X} g^{p} \dd m\]
where the infimum is taken over all Borel functions $g\colon X \to [0,\infty]$ satisfying
\[\int_{\gamma} g \dd s \geq 1\]
for all curves $\gamma \in \Gamma$. We say a property of curves holds for $p$-a.e. curve if the set of absolutely continuous curves for which it fails has $p$-modulus zero.
\end{definition}

The definition of $p$-weak gradient is based on inequality in the Fundamental theorem of calculus along $p$-a.e. curve \cite{KM98}, \cite{AGS13}.

\begin{definition}\label{weakgradient}
Let $(X,d,m)$ be a metric measure space and $p\geq 1$. A Borel function $g\colon X \to [0,\infty]$ is a $p$-upper gradient of $f\colon X \to \mathbb{R}$ if
\[|f(b_{\gamma})-f(a_{\gamma})| \leq \int_{\gamma} g \dd s \text{ for }p\text{-a.e. curve }\gamma.\]
If $p>1$ then the minimal $p$-upper gradient $|\nabla f|_{m, p}$ of $f\colon X \to \mathbb{R}$ is the $p$-upper gradient characterized, up to $m$-negligible sets, by the property
\[|\nabla f|_{m, p} \leq g \qquad m \text{-a.e. in }X \text{ for every $p$-upper gradient }g \text{ of }f.\]
\end{definition}

For the remainder of the paper we fix $\alpha>0$ and denote $\beta=1/\alpha$. Let $\mu$ be the measure from Theorem \ref{mainthm} and consider the metric measure space $(\mathbb{R}^{n},|\cdot|,\mu)$ so that $p\leq 1+\alpha$ implies $\mathrm{Mod}_{p,\mu}(\Gamma_{c})=0$. In this case the function identically equal to zero is a $p$-upper gradient for every function; hence $|\nabla f|_{m,p}=0$ for any function $f\colon \mathbb{R}^{n} \to \mathbb{R}$.

Now we recall the notion of a Muckenhoupt $A_p$-weight on $\mathbb{R}^{n}$; we only consider the case $p>1$ though a similar definition may be given for $p=1$ \cite{HKM93}. If $(X,d,m)$ is a metric measure space, with $f\colon X \to \mathbb{R}$ and $A\subset X$ Borel measurable such that $m(A)>0$, then we denote $f_{A}=\dashint_{A}f \dd m=(1/m(A))\int_{A}f \dd m$ whenever the quotient is well defined. If no measure is specified, integrals over subsets of $\mathbb{R}^{n}$ are with respect to Lebesgue measure $\mathcal{L}^{n}$; we also use the notation $\mathcal{L}^{n}(A)=|A|$.

\begin{definition}\label{Ap}
Let $p>1$. A function $w\colon \mathbb{R}^{n} \to \mathbb{R}^{+}$ is a Muckenhoupt $A_{p}$-weight if for some constant $C>0$ and all balls $B\subset \mathbb{R}^{n}$,
\begin{equation}\label{Apinequality}
\left(\dashint_{B}w\right) \left( \dashint_{B} w^{1/(1-p)} \right)^{p-1} \leq C.
\end{equation}
\end{definition}

Muckenhoupt $A_{p}$-weights were first introduced in \cite{Muc72} as precisely those weights for which the Hardy maximal function of the associated measure is bounded in $L^{p}$. The $A_{p}$ condition has numerous applications, for example to weighted Sobolev spaces \cite{CC94} and regularity of the solutions of degenerate elliptic equations \cite{FKS82}.

We recall that a Borel measure $m$ on a metric space $X$ is doubling if balls have finite positive measure and there is a constant $C\geq 1$ such that
\[m(B(x,2r))\leq Cm(B(x,r))\]
for all $x \in \mathbb{R}^{n}$ and $r>0$. The slope $|\nabla f|\colon X \to \mathbb{R}^{+}$ of a locally Lipschitz function $f\colon X \to \mathbb{R}$ is defined by
\[|\nabla f|(x)=\limsup_{y \to x}\frac{|f(y)-f(x)|}{d(x,y)}.\]
A Borel measure $m$ on $X$ admits a weak $p$-Poincar\'{e} inequality if there are constants $C>0$ and $\lambda \geq 1$ such that
\begin{equation}\label{poincareinequality}
\dashint_{B} |f-f_{B}|\dd m \leq Cr\left( \dashint_{\lambda B} |\nabla f|^{p}\dd m \right)^{1/p}
\end{equation}
whenever $B$ is a ball with radius $r$ and $f$ is a locally Lipschitz function on $\lambda B$. The notion of a Poincar\'{e} inequality on a metric measure space was originally introduced in \cite{HK98} to study quasiconformal mappings. Note that, by H\"{o}lder's inequality, the condition of a weak $p$-Poincar\'{e} inequality becomes weaker as $p$ increases. If a metric measure space equipped with a doubling measure admits a weak $p$-Poincar\'{e} inequality then it admits a differentiable structure \cite{Che99}; in fact, a Lip-lip inequality suffices in place of a Poincar\'{e} inequality \cite{Kei03}. Roughly, a Lip-lip inequality states that at almost every point the variation of a Lipschitz function on small scales is independent of the precise choice of scale.

We use the fact that if $w$ is a Muckenhoupt $A_{p}$-weight on $\mathbb{R}^{n}$ then the measure $\mu = w\mathcal{L}^{n}$ is $p$-admissible \cite{HKM93}; this means that $\mu$ is doubling and satisfies a weak $p$-Poincar\'{e} inequality. For $n=1$ the converse holds: if $\mu$ is $p$-admissible then $w$ must be an $A_{p}$-weight \cite{BBK05}. However, inequality \eqref{Apinequality} seemed easier to check than verifying inequality \eqref{poincareinequality} directly.

If a doubling metric measure space admits a weak $p$-Poincar\'{e} inequality then, for Lipschitz functions, the $p$-upper gradient $|\nabla f|_{m,p}$ agrees, up to negligible sets, with the slope \cite{Che99}, \cite{Kei03}. Hence, for $p>1+\alpha$, if $\mu$ is the measure in Theorem \ref{mainthm}, then the $p$-weak slope $|\nabla f|_{m,p}$ of Lipschitz functions $f\colon \mathbb{R}^{n} \to \mathbb{R}$ on $(\mathbb{R}^{n},|\cdot|,\mu)$ is non trivial.

We also note that, since $\mu$ is absolutely continuous with respect to Lebesgue measure, the metric measure space $(\mathbb{R}^{n},|\cdot|,\mu)$ satisfies a Lip-lip inequality. Further, in any metric measure space $(X,d,m)$, lower semicontinuity of the map, defined on Lipschitz functions,
\[f \mapsto \int_{X}|\nabla f|^{p} \dd m\]
in $L^{p}$ implies the $p$-weak gradient agrees with the slope for Lipschitz functions \cite{ACM13}. Hence we observe that a Lip-lip inequality is not sufficient for lower semicontinuity of the integral of the $p$-th power of the slope; this answers a question raised in \cite{ACM13}.

We now give an idea of the construction of the weight $w$ in Theorem \ref{mainthm}. Firstly we suppose $n=1$; one starts with the weight $w_{1}\equiv 1$, then repeatedly defines $w_{k}=\min \{w_{k-1},g_{k}\}$ where $g_{k}$ is a scaled and translated copy of $|x|^{\alpha}$ centred on some rational $q_{k}$. We do this for a dense, non repeating, sequence of rationals $(q_{k})_{k=1}^{\infty}$ and define $w=\inf_{k} w_{k}$. The function $1/w^{s}$ is locally integrable for $s<\beta$ but nowhere locally integrable for $s\geq \beta$; this discrepancy allows us to prove the first property in Theorem \ref{mainthm}. Further, provided the copies of $|x|^{\alpha}$ are scaled to be sufficiently thin, each stage in the construction increases the left hand side of inequality \eqref{Apinequality} only a small amount; this allows us to prove the second property in Theorem \ref{mainthm}. To prove Theorem \ref{mainthm} for general $n$ we define $\widehat{w}(x_{1},\dots,x_{n})=\min \{ w(x_{1}), \dots ,w(x_{n})\}$ on $\mathbb{R}^{n}$. Then $\widehat{w}$ has the same integrability properties as $w$ (but now with respect to $\mathcal{L}^{n}$), which gives the first property, and the lattice property of $A_{p}$-weights \cite{KKM} allows us to extend the second property from $w$ to $\widehat{w}$.

\textbf{Acknowledgement} The authors acknowledge the support of the grant ERC ADG GeMeThNES. The authors thank Luigi Ambrosio for highlighting the question, mentioned in \cite{AGS13}, which led to this paper and for helpful comments during the preparation of the paper. We also thank an anonymous referee for giving useful comments and for pointing out the extension of the example to $\mathbb{R}^{n}$ for $n>1$.

\section{Construction of the weight}

Fix a sequence $\varepsilon_{k}>0$ such that $\prod_{k=1}^{\infty}(1+\varepsilon_{k})<\infty$ and enumerate the rational numbers by a sequence $(q_{k})_{k=1}^{\infty}$ with $q_{k}\neq q_{l}$ for $k\neq l$. We inductively define a sequence of continuous weights $w_{k}\colon \mathbb{R} \to \mathbb{R}^{+}$; among other properties the weights satisfy $w_{k}\leq w_{k-1}$ and $w_{k}(x)>0$ if $x\notin \{q_{l}\colon l=1, \ldots, k\}$. Denoting by $w$ the limit of the weights $w_{k}$ we will verify Theorem \ref{mainthm} for the weight $\widehat{w}$ on $\mathbb{R}^{n}$ given by $\widehat{w}(x_{1},\dots,x_{n})=\min \{ w(x_{1}),\ldots,w(x_{n}) \}$.

Let $w_{1}\colon \mathbb{R} \to \mathbb{R}^{+}$ be the function which is constant and equal to $1$. Fix $k \in \mathbb{N}$ for which the weight $w_{k-1}$ has been defined; we show how to define $w_{k}$. Since $w_{k-1}$ is continuous and $w_{k-1}(q_{k})>0$ (using the properties described in the previous paragraph) we can choose $R_{k}>0$ so that 
\[w_{k-1}(q_{k})/2 \leq w_{k-1}(x) \leq 2w_{k-1}(q_{k})\]
for $|x-q_{k}|\leq 4R_{k}$. 

Fix $r_{k}>0$ such that:
\[r_{k}\leq w_{k-1}(q_{k})^{\beta}\varepsilon_{k},\]
\[8r_{k}\leq \varepsilon_{k}(R_{k}-r_{k})\]
and
\[2r_{k}(p-\alpha+1)/(p-1)\leq \varepsilon_{k}(R_{k}-r_{k}).\]
We let
\[g_{k}(x)=2w_{k-1}(q_{k})|(x-q_{k})/r_{k}|^{\alpha}\]
for $x \in \mathbb{R}$ and define $w_{k}\colon \mathbb{R} \to \mathbb{R}^{+}$ by
\[w_{k}(x)=\min \{w_{k-1}(x),g_{k}(x)\}.\]
The function $w_{k}$ is continuous, $w_{k}\leq w_{k-1}$ and $w_{k}>0$ if $x\notin \{q_{l}\colon l=1, \ldots, k\}$.

Denote $I_{k}=(q_{k}-r_{k},q_{k}+r_{k})$ and note that $w_{k}=w_{k-1}$ outside $I_{k}$. We also define $J_{k}=(q_{k}-R_{k},q_{k}+R_{k})$, $J_{k}^{+}=[q_{k}+r_k,q_{k}+R_{k})$ and $J_{k}^{-}=(q_{k}-R_{k},q_{k}-r_k)$. 

Let $w\colon \mathbb{R} \to \mathbb{R}^{+}$ be given by $w=\inf_{k}w_{k}$. We define a Borel weight $\widehat{w}\colon \mathbb{R}^{n} \to \mathbb{R}^{+}$ by:
\[\widehat{w}(x_{1},\dots,x_{n})=\min \{ w(x_{1}),\ldots,w(x_{n}) \} \]
and let $\mu=\widehat{w} \mathcal{L}^{n}$.

\section{The $p$-modulus on curves is trivial for small $p$}

In this section we show that $p<1+\alpha$ implies $\mathrm{Mod}_{p,\mu}(\Gamma_{c})=0$, where $\Gamma_{c}$ is the family of non constant absolutely continuous curves in $\mathbb{R}^{n}$. This fact arises from simple integrability properties of $1/w$ on $\mathbb{R}$ which follow from corresponding properties of $1/|x|^{\alpha}$. Recall that $\beta=1/\alpha$.

\begin{lemma}\label{integrability}
Let $r=e^{\alpha(\alpha+1)}$. The weight $w\colon \mathbb{R} \to \mathbb{R}^{+}$ has the following integrability properties:
\begin{itemize}
	\item[(1)] The function $1/w^{s}$ is locally Lebesgue integrable if $s<\beta$.
	\item[(2)] The function $1/(w^{\beta} |\log ( w/r ) |^{1+\alpha})$ is locally Lebesgue integrable.
	\item[(3)] The function $1/w^{s}$ is nowhere locally Lebesgue integrable if $s\geq \beta$.
	\item[(4)] The function $1/(w^{\beta} |\log( w/r ) |)$ is nowhere locally Lebesgue integrable.
\end{itemize}
\end{lemma}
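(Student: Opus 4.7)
The four claims split into integrability (1)--(2) and non-integrability (3)--(4), which use opposite strategies: the former exploits the fact that each dip $I_k$ is very thin (encoded by $r_k\le\varepsilon_k w_{k-1}(q_k)^\beta$ with $\sum\varepsilon_k<\infty$, a consequence of $\prod(1+\varepsilon_k)<\infty$), while the latter exploits density of $\{q_k\}$ together with the fact that $w\le g_k\sim|x-q_k|^\alpha$ near $q_k$.

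For (1) and (2), the common framework is the following. Pick $F\colon(0,1]\to\mathbb R^+$ Borel and non-increasing, so that $w_N\searrow w$ yields $F(w_N)\nearrow F(w)$ and monotone convergence gives $\int_{[a,b]}F(w)\dd x=\lim_N\int_{[a,b]}F(w_N)\dd x$. Telescoping, and using that $w_k=w_{k-1}$ outside $I_k$ while inside $I_k$ one has either $w_k=g_k<w_{k-1}$ or $w_k=w_{k-1}$, I obtain the key inequality
\[\int_{[a,b]}\big(F(w_k)-F(w_{k-1})\big)\dd x\le\int_{I_k}F(g_k)\dd x.\]
For (1) with $F(x)=x^{-s}$, the substitution $t=(x-q_k)/r_k$ gives $\int_{I_k}g_k^{-s}\dd x=2r_k(2w_{k-1}(q_k))^{-s}/(1-s\alpha)$; combined with $r_k\le\varepsilon_k w_{k-1}(q_k)^\beta$ and $w_{k-1}(q_k)^{\beta-s}\le 1$ (using $s<\beta$ and $w\le 1$), this is bounded by a constant times $\varepsilon_k$. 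Summing over $k$ closes (1).

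For (2), the specific constant $r=e^{\alpha(\alpha+1)}$ is reverse-engineered so that $F(x)=x^{-\beta}|\log(x/r)|^{-(1+\alpha)}$ is non-increasing on $(0,1]$: differentiation yields the sign condition $(1+\alpha)-\beta\log(r/x)\le 0$, which holds precisely when $x\le r\cdot e^{-\alpha(\alpha+1)}=1$. Substituting $t=(x-q_k)/r_k$ and then $v=\log(g_k/r)$ transforms $\int_{I_k\cap\{g_k<w_{k-1}\}}F(g_k)\dd x$ into a constant multiple of $r_k w_{k-1}(q_k)^{-\beta}(\log(r/c_k))^{-\alpha}$, where $c_k=\sup\{g_k(x):x\in I_k,\ g_k(x)<w_{k-1}(x)\}\le 1<r$ ensures $\log(r/c_k)\ge\log r=\alpha(\alpha+1)>0$. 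Applying $r_k\le\varepsilon_k w_{k-1}(q_k)^\beta$ once more bounds this by a constant times $\varepsilon_k$, and summing finishes. This is the main computational obstacle, since both the monotonicity of $F$ and the lower bound on $\log(r/c_k)$ must be secured by the single choice of $r$.

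For (3) and (4), density of $\{q_k\}$ ensures that any open interval $U\subset\mathbb R$ contains some $q_k$ together with a subinterval $(q_k-\delta,q_k+\delta)\subset U\cap I_k$. There $w\le g_k$, and the explicit form $g_k(x)=2w_{k-1}(q_k)|(x-q_k)/r_k|^\alpha$ reduces $\int_{q_k-\delta}^{q_k+\delta}g_k^{-s}\dd x$ to a constant times $\int_0^\delta y^{-s\alpha}\dd y$, which diverges for $s\ge\beta$; this gives (3). For (4), $|\log(g_k/r)|\sim\alpha\log(1/|x-q_k|)$ as $x\to q_k$, so the integrand is comparable to $(y\log(1/y))^{-1}$, whose integral near $0$ diverges.
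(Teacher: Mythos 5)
Your proposal is correct and follows essentially the same strategy as the paper: a telescoping estimate over the construction stages in which each increment on $I_k$ is bounded by $C\varepsilon_k$ via $r_k\leq\varepsilon_k w_{k-1}(q_k)^{\beta}$ followed by monotone convergence for (1)--(2), and density of $\{q_k\}$ together with $w\leq g_k\sim|x-q_k|^{\alpha}$ near $q_k$ for (3)--(4). Your treatment of (2) via monotonicity of $F(x)=x^{-\beta}|\log(x/r)|^{-(1+\alpha)}$ on $(0,1]$ is the same computation as the paper's use of the increasing function $\Phi(t)=t(-\log(t^{\alpha}/r))^{1+\alpha}$, and you additionally make explicit why $r=e^{\alpha(\alpha+1)}$ is the right constant, which the paper only asserts.
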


\begin{proof}
Suppose first $s<\beta$ and $N \in \mathbb{N}$. Clearly, for each integer $k>1$, $w_{k}=w_{k-1}$ outside $I_{k}$ implies
\[\int_{-N}^{N} \frac{1}{w_{k}^{s}}\leq \int_{-N}^{N} \frac{1}{w_{k-1}^{s}}+\int_{q_{k}-r_{k}}^{q_{k}+r_{k}}\frac{1}{w_{k}^{s}}.\]
We show the second term is relatively small. Indeed, since
\[w_{k}(x)\geq \frac{1}{2}w_{k-1}(q_{k})|(x-q_{k})/r_{k}|^{\alpha}\]
for $x \in (q_{k}-r_{k},q_{k}+r_{k})$ and $\alpha s < 1$, we have,
\begin{align*}
\int_{q_{k}-r_{k}}^{q_{k}+r_{k}}\frac{1}{w_{k}^{s}}&\leq \frac{2^{s}r_{k}^{\alpha s}}{w_{k-1}(q_{k})^{s}}\int_{q_{k}-r_{k}}^{q_{k}+r_{k}}\frac{1}{|x-q_{k}|^{\alpha s}}\\
&\leq Cr_{k}/w_{k-1}(q_{k})^{s}\\
&\leq Cr_{k}/w_{k-1}(q_{k})^{\beta}\\
&\leq C\varepsilon_{k}.
\end{align*}
Since $w_{1}$ was constant (so trivially locally integrable) and $\varepsilon_{k}$ were chosen small we deduce that the sequence $\int_{-N}^{N} 1/w_{k}^{s}$ is bounded uniformly in $k$. By the Monotone Convergence Theorem we obtain that $1/w^{s}$ is integrable on the interval $[-N,N]$.

For the second assertion a similar estimate is required: first of all the function $\Phi : t \mapsto t (-\log(t^{\alpha}/r) )^{1+\alpha}$ is increasing in $(0,1)$, and thus we can make the estimate
$$\int_{q_k-r_k}^{q_k+r_k} \frac 1{\Phi (w_k^{\beta}) } \leq \int_{q_k-r_k}^{q_k+r_k} \frac 1{ \Phi ( C_k |\frac{x-q_k}{r_k}| )} = \frac {2r_k}{C_k} F(C_k)$$
where $C_k = ( w_{k-1}(q_k)/2 )^{\beta}$ and $F$ is the primitive of $1/\Phi$ such that $F(0)=0$. Substituting  $r_{k}\leq CC_{k}\varepsilon_{k}$ and using the definition of $\Phi$ we obtain
$$\int_{q_k-r_k}^{q_k+r_k} \frac 1{ w_k^{\beta} | \log(w_k /r)| ^{1+\alpha} }  \leq \frac {2r_k}{C_k} F(1) \leq C \varepsilon_k.$$
We now obtain the required integrability as before.

Now suppose $s\geq \beta$ and $I$ is a non empty interval. Then we can find $k \in \mathbb{N}$ for which $q_{k} \in I$. It follows,
\[\int_{I} 1/w^{s} \geq C(k)\int_{I} 1/|x-q_{k}|^{\alpha s}\]
and the right hand side is equal to $\infty$ since $\alpha s \geq 1$. In the same way we have that $w_k^\beta \log ( w_k/r) \sim C(k) |x-q_k| \log |x-q_k|$ in a neighborhood of $q_k$ and so the final statement follows.
\end{proof}

Notice the previous lemma implies that $w$ is nonzero outside a set of Lebesgue measure zero. We recall some elementary facts about the modulus which are valid on any metric measure space \cite{Hei01}.

\begin{lemma}\label{modulusfacts}
Let $(X,d,m)$ be a metric measure space. The modulus $\mathrm{Mod}_{p,m}$ satisfies
\[\mathrm{Mod}_{p,m}(\Gamma_{a})\leq \mathrm{Mod}_{p,m}(\Gamma_{b})\]
if $\Gamma_{a}$ and $\Gamma_{b}$ are two curve families such that each curve in $\Gamma_{a}$ has a subcurve in $\Gamma_{b}$. Further, $\mathrm{Mod}_{p,m}(\Gamma)=0$ if and only if there is a $p$-integrable Borel function $g\colon X\to [0,\infty]$ such that $\int_{\gamma} g \dd s=\infty$ for each $\gamma \in \Gamma$.
\end{lemma}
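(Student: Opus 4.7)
The plan is to prove the two assertions of Lemma \ref{modulusfacts} directly from the definition of $p$-modulus, using monotonicity for the first part and a standard Borel--Cantelli style summation argument for the second.

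For the monotonicity statement, I would let $g \colon X \to [0,\infty]$ be any Borel function admissible for $\Gamma_{b}$, meaning $\int_{\gamma'} g \dd s \geq 1$ for every $\gamma' \in \Gamma_{b}$. Given $\gamma \in \Gamma_{a}$, the hypothesis provides a subcurve $\gamma' \in \Gamma_{b}$, and since $g \geq 0$ the path integral over the full curve dominates that over the subcurve, so $\int_{\gamma} g \dd s \geq \int_{\gamma'} g \dd s \geq 1$. Hence $g$ is also admissible for $\Gamma_{a}$, and taking the infimum of $\int_{X} g^{p} \dd m$ over this larger class of admissible functions yields $\mathrm{Mod}_{p,m}(\Gamma_{a}) \leq \mathrm{Mod}_{p,m}(\Gamma_{b})$.

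For the second assertion, the easy direction is that if such a $g \in L^{p}(X,m)$ with $\int_{\gamma} g \dd s = \infty$ for all $\gamma \in \Gamma$ exists, then for every $\varepsilon > 0$ the function $\varepsilon g$ is admissible for $\Gamma$, yielding $\mathrm{Mod}_{p,m}(\Gamma) \leq \varepsilon^{p} \int_{X} g^{p} \dd m$; letting $\varepsilon \to 0$ forces the modulus to vanish. For the converse, if $\mathrm{Mod}_{p,m}(\Gamma) = 0$ then for each $k \in \mathbb{N}$ I can pick a Borel admissible $g_{k} \colon X \to [0,\infty]$ for $\Gamma$ with $\int_{X} g_{k}^{p} \dd m < 2^{-k}$. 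Define $g = \sum_{k=1}^{\infty} g_{k}$, which is Borel as an increasing limit of Borel partial sums. By Minkowski's inequality in $L^{p}(X,m)$,
\[
\left( \int_{X} g^{p} \dd m \right)^{1/p} \leq \sum_{k=1}^{\infty} \left( \int_{X} g_{k}^{p} \dd m \right)^{1/p} \leq \sum_{k=1}^{\infty} 2^{-k/p} < \infty,
\]
so $g \in L^{p}(X,m)$. For any $\gamma \in \Gamma$, monotone convergence in $k$ and admissibility of each $g_{k}$ give $\int_{\gamma} g \dd s = \sum_{k=1}^{\infty} \int_{\gamma} g_{k} \dd s \geq \sum_{k=1}^{\infty} 1 = \infty$, as required.

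No step here looks genuinely difficult; the only mild subtlety is justifying the interchange of sum and line integral in the converse direction, which is handled cleanly by the monotone convergence theorem applied on the parametrizing interval of $\gamma$ using the definition $\int_{\gamma} h \dd s = \int_{0}^{1} h(\gamma(s)) |\dot{\gamma}|(s) \dd s$. Beyond that, the argument is essentially bookkeeping with the definition of admissibility and the outer measure structure of $\mathrm{Mod}_{p,m}$.
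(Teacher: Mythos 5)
Your proof is correct and is the standard argument: the paper itself gives no proof of this lemma, simply citing \cite{Hei01}, where precisely this reasoning (admissible functions for $\Gamma_b$ are admissible for $\Gamma_a$; the $\varepsilon g$ rescaling for one direction and the convergent sum $g=\sum_k g_k$ with $\|g_k\|_{L^p}^p<2^{-k}$ for the other) appears. Nothing to add.
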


Now we can deduce the required properties of the $p$-modulus on $(\mathbb{R}^{n},|\cdot|,\mu)$.

\begin{proposition}\label{trivialmodulusprop}
Let $\Gamma_{c}$ be the family of non constant absolutely continuous curves on $\mathbb{R}^{n}$ and $p\leq 1+\alpha$. Then $\mathrm{Mod}_{p,\mu}(\Gamma_{c})=0$.
\end{proposition}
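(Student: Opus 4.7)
The plan is to invoke the characterization in Lemma \ref{modulusfacts}: it suffices to construct a Borel function $g \in L^p(\mathbb{R}^n, \mu)$ satisfying $\int_\gamma g \dd s = \infty$ for every $\gamma \in \Gamma_c$. Since every absolutely continuous curve is bounded, $\Gamma_c$ decomposes as $\bigcup_{N \in \mathbb{N}} \Gamma_c^N$, where $\Gamma_c^N$ consists of those non constant curves whose image lies in the cube $Q_N = [-N,N]^n$. Because $\mathrm{Mod}_{p,\mu}$ is a countably subadditive outer measure, it is enough to exhibit, for each $N$, a function $g_N$ as above that forces $\mathrm{Mod}_{p,\mu}(\Gamma_c^N) = 0$.

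For fixed $N$, I would set $h(x) = 1/(w(x)^\beta |\log(w(x)/r)|)$ on $\mathbb{R}$ and
\[g_N(x) = \mathbf{1}_{Q_N}(x) \sum_{i=1}^n h(x_i).\]
To check $g_N \in L^p(\mu)$, use $\widehat{w}(x) \leq w(x_i)$ together with $(\sum_i a_i)^p \leq n^{p-1}\sum_i a_i^p$ to bound
\[\int_{\mathbb{R}^n} g_N^p \dd \mu \leq n^{p-1}(2N)^{n-1} \sum_{i=1}^n \int_{-N}^N h(y)^p w(y) \dd y.\]
A direct computation gives $h^p w = 1/(w^{p\beta-1}|\log(w/r)|^p)$. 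At the critical exponent $p = 1+\alpha$ this is exactly $1/(w^\beta |\log(w/r)|^{1+\alpha})$, locally Lebesgue integrable by Lemma \ref{integrability}(2). For $p < 1+\alpha$, the polynomial exponent satisfies $p\beta - 1 < \beta$, and since $w \leq 1 < r$ the factor $|\log(w/r)|^p$ is bounded below by $(\log r)^p > 0$, so $h^p w \leq C/w^{p\beta-1}$ is locally integrable by Lemma \ref{integrability}(1).

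For $\gamma \in \Gamma_c^N$, some coordinate $\gamma_i$ is non constant; its image is then an interval containing some non degenerate $[a,b]$. Using $|\dot{\gamma}|(t) \geq |\gamma_i'(t)|$ together with the area formula (Banach indicatrix) for absolutely continuous real valued functions,
\[\int_\gamma g_N \dd s \geq \int_I h(\gamma_i(t))|\gamma_i'(t)|\dd t = \int_\mathbb{R} h(y) N_{\gamma_i}(y) \dd y \geq \int_a^b h(y)\dd y,\]
which is infinite by Lemma \ref{integrability}(4). I expect the main obstacle to be the endpoint $p = 1+\alpha$: the naive candidate $h = 1/w^\beta$ satisfies $h^p w = 1/w^\beta = h$, so local integrability of $h^p w$ and non integrability of $h$ would both be controlled by Lemma \ref{integrability}(3) and end up in direct conflict. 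The logarithmic refinement encoded in parts (2) and (4) of Lemma \ref{integrability} is exactly the slack that lets the same function $h$ sit on the integrable side of (2) when raised to the $p$-th power and multiplied by $w$, and on the non integrable side of (4) when left alone.
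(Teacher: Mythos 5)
Your proof is correct and follows essentially the same route as the paper: reduce to curves contained in a fixed cube, exhibit a $p$-integrable Borel test function whose line integral along every non constant curve diverges via the one-dimensional integrability dichotomy of Lemma \ref{integrability}, and conclude with Lemma \ref{modulusfacts}. The only cosmetic differences are that you use the single logarithmically corrected function $1/(w^{\beta}|\log(w/r)|)$ for all $p\leq 1+\alpha$ (the paper switches to the simpler $1/\widehat{w}^{\beta}$ when $p<1+\alpha$) and the sum $\sum_{i}h(x_{i})$ in place of $h$ composed with $\widehat{w}$; both choices work for exactly the reasons you give.
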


\begin{proof}
For each $k \in \mathbb{N}$ let $\Gamma_{k}$ be the family of non constant absolutely continuous curves with image contained in $[-k,k]^{n}$. Using Lemma \ref{modulusfacts} it suffices to show that $\mathrm{Mod}_{p,\mu}(\Gamma_{k})=0$ for each $k$.

First suppose $p<1+\alpha$; fix $k \in \mathbb{N}$ and recall $\beta=1/\alpha$. Let $g\colon \mathbb{R}^{n} \to \mathbb{R}^{+}$ be equal to $1/\widehat{w}^{\beta}$ inside $[-k,k]^{n}$ and identically $0$ outside $[-k,k]^{n}$. Suppose $\gamma = (\gamma_{1},\dots,\gamma_{n})  \in \Gamma_{k}$ and fix $i$ such that the image of $\gamma_{i}$ contains some non trivial interval $I\subset \mathbb{R}$. Then,
\begin{align*}
\int_{\gamma} g\dd s &\geq \int_{\gamma} 1/w(x_{i})^{\beta} \dd s\\
&\geq \int_{\gamma_{i}} 1/w(t)^{\beta} \dd s\\
&\geq \int_{I} 1/w(t)^{\beta} \dd t\\
&= \infty
\end{align*}
using Lemma \ref{integrability}. However,
\begin{align*}
\int_{\mathbb{R}^{n}} g^{p} \dd \mu &= \int_{[-k,k]^{n}} \widehat{w}^{1 - \beta p}\\
&\leq \int_{[-k,k]^{n}} \sum_{i=1}^{n} w(x_{i})^{1-\beta p}\\
&\leq n(2k)^{n-1}\int_{-k}^{k} w(t)^{1-\beta p} \dd t
\end{align*}
which, by Lemma \ref{integrability}, is finite if $\beta p -1 <\beta$ or, equivalently, $p<1+\alpha$. Hence, by Lemma \ref{modulusfacts}, $\mathrm{Mod}_{p,\mu}(\Gamma_{k})=0$ and the proposition follows.

In the case $p=1+\alpha$ we choose $g=1/(\widehat{w}^{\beta}|\log(\widehat{w}/r)|)$; the argument is then identical using the analogous statements about integrability from Lemma \ref{integrability}.
\end{proof}

\section{The Muckenhoupt $A_{p}$ condition for large $p$}

We suppose throughout this section that $p>1+\alpha$. We first show that $w$ is a Muckenhoupt $A_{p}$-weight on $\mathbb{R}$ and then deduce $\widehat{w}$ is an $A_{p}$-weight on $\mathbb{R}^{n}$ using the lattice property of $A_{p}$-weights \cite{KKM}. To verify $w$ is a Muckenhoupt $A_{p}$-weight the idea will be that constructing $w_{k}$ from $w_{k-1}$ can increase the left side of inequality \eqref{Apinequality} only very slightly. We use a different argument depending on whether the ball in \eqref{Apinequality} is relatively small or relatively large.

It will be important during the proof that $|x|^{\alpha}$ is a Muckenhoupt $A_p$-weight on $\mathbb{R}$; this fact is well known (for example see Remark 4 \cite{BBK05}; this is also valid in $\mathbb{R}^n$ provided $p>1+n\alpha$) but we prefer to provide here a self-contained proof.

\begin{lemma}\label{lem:xalpha} The function $g(x)=|x|^{\alpha}$ on $\mathbb{R}$ is an $A_p$-weight. 
\end{lemma}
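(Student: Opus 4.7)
The plan is to exploit the scaling symmetry of the weight $g(x)=|x|^\alpha$: under the dilation $y=x/r$, both factors in the $A_p$-quantity transform homogeneously and the powers of $r$ cancel. Concretely, for any ball $B=(x_0-r,x_0+r)$, changing variables gives
\[
\left(\dashint_B |x|^\alpha \dd x\right)\left(\dashint_B |x|^{-\alpha/(p-1)}\dd x\right)^{p-1} = A(t)\,B(t)^{p-1},
\]
where $t=x_0/r$ and
\[
A(t)=\dashint_{t-1}^{t+1}|y|^\alpha \dd y,\qquad B(t)=\dashint_{t-1}^{t+1}|y|^{-\alpha/(p-1)}\dd y.
\]
So I only need a uniform bound on $A(t) B(t)^{p-1}$ as $t$ ranges over $\mathbb{R}$.

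First I would observe that $p>1+\alpha$ is exactly equivalent to $\alpha/(p-1)<1$; this is the condition that makes $|y|^{-\alpha/(p-1)}$ locally integrable, so $B(t)$ is finite for every $t$ (in particular, even near $t=0$ where the integral includes the singularity of $|y|^{-\alpha/(p-1)}$). Without this, no $A_p$ bound could hold.

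Next I would split into two regimes. For $|t|\leq 2$, both $A(t)$ and $B(t)$ are continuous functions of $t$ on a compact set (the only potential issue is the singularities at $y=0$, which are integrable by the previous paragraph), so $A(t) B(t)^{p-1}$ is bounded there. For $|t|\geq 2$, on the interval $(t-1,t+1)$ we have $|y|/|t|\in[1/2, 3/2]$, so $|y|^\alpha$ and $|y|^{-\alpha/(p-1)}$ are each comparable to constants times $|t|^\alpha$ and $|t|^{-\alpha/(p-1)}$ respectively. Thus $A(t)\sim |t|^\alpha$ and $B(t)^{p-1}\sim |t|^{-\alpha}$, and the product is bounded by a constant independent of $t$.

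There is no real obstacle: the argument is essentially a dimensional analysis (scaling) combined with a two-case split depending on whether the ball is near or far from the origin. The only point that must be handled carefully is the near-origin case, but local integrability of $|y|^{-\alpha/(p-1)}$, guaranteed by $p>1+\alpha$, takes care of it.
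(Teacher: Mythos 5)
Your proof is correct, and it takes a somewhat different route from the paper's. Both arguments ultimately rest on the same dichotomy (interval far from the origin relative to its length, where $|x|^{\alpha}$ is essentially constant, versus interval close to the origin, where one must confront the singularity of $|x|^{\alpha/(1-p)}$), but you organize it differently: you first use the exact homogeneity of $|x|^{\alpha}$ under dilations to cancel the powers of $r$ and reduce to the one-parameter family of unit balls $(t-1,t+1)$, and you then dispose of the near-origin regime $|t|\leq 2$ by a soft argument --- $A(t)$ and $B(t)$ are continuous (the latter because $\alpha/(p-1)<1$, i.e.\ $p>1+\alpha$, makes $|y|^{\alpha/(1-p)}$ locally integrable) and hence bounded on a compact set. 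The paper instead first reduces by symmetry to intervals $[a,b]$ with $0\leq a<b$, splits on whether $2a\geq b$, and in the hard case computes both averages explicitly via $\frac{1}{b-a}\int_0^b$, which yields an explicit constant $C(\alpha,p)$. Your scaling reduction is arguably cleaner and generalizes immediately to $|x|^{\alpha}$ on $\mathbb{R}^n$ (with the condition $p>1+\alpha/n$ for local integrability of the dual weight), at the price of a non-constructive bound in the near-origin case; the paper's computation is more elementary and produces the constant explicitly. Both correctly identify $p>1+\alpha$ as exactly the integrability threshold that makes the estimate possible.
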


\begin{proof} Let $I=[a,b]$ be an interval. Denote $I^+ = I \cap [0,+\infty)$ and $I^-=I \cap ( -\infty,0]$. Without loss of generality we can assume that $|I^+| \geq |I^-|$; in this case we have that $I^- \subseteq - I^{+}$ and so, using that $g$ is an even function, we have
\[ \left(  \dashint_I g \right) \left( \dashint_I g^{1/(1-p)} \right)^{p-1} \leq 2^p \left(  \dashint_{I^+} g \right) \left( \dashint_{I^+} g^{1/(1-p)} \right)^{p-1}. \]
Hence it is sufficient to prove \eqref{Apinequality} only for intervals $I=[a,b]$ such that $0\leq a < b$.\\
We distinguish two cases:
\begin{itemize}
 \item $2a \geq b$. In this case, given the monotonicity of $g$ we can estimate each of the factors in the left hand side of \eqref{Apinequality} with the values of the integrand at the endpoint: in particular we can estimate it from above by $g(b)/g(a) \leq 2^{\alpha}$.
 
 \item $2a < b$. In this case we have that $1/(b-a) \leq 2/b$ and so
%  \begin{equation}\label{eqn:alpha1}
%  \dashint_a^b x^{\alpha} \, dx \leq \frac 1{b-a} \int_0^b x^{\alpha} \, dx \leq \frac {2b^{\alpha}}{\alpha+1}; 
%  \end{equation}
%  \begin{equation}\label{eqn:alpha2}
%   \dashint_a^b x^{\alpha/(1-p)} \, dx \leq \frac 1{b-a} \int_0^b x^{\alpha/(1-p)} \, dx \leq \frac {2b^{\alpha/(1-p)}}{\alpha/(1-p)+1}.
%  \end{equation}
\[ \dashint_a^b x^{\alpha} \, \dd x \leq \frac 1{b-a} \int_0^b x^{\alpha} \, \dd x \leq \frac {2b^{\alpha}}{\alpha+1}; \]
\[ \dashint_a^b x^{\alpha/(1-p)} \, \dd x \leq \frac 1{b-a} \int_0^b x^{\alpha/(1-p)} \, \dd x \leq \frac {2b^{\alpha/(1-p)}}{\alpha/(1-p)+1}. \]
 These two inequalities together give us precisely \eqref{Apinequality}, with $C$ depending only on $\alpha$ and $p$.
\end{itemize} 
\end{proof}

The following Lemma will be used to estimate \eqref{Apinequality} for relatively small intervals; the idea will be that early stages in the construction play no role on small scales.

\begin{lemma}\label{universal}
Suppose $q \in \mathbb{R}$, $R>0$ and $f\colon (q-R,q+R) \to \mathbb{R}^{+}$ is Borel with $L/2\leq f\leq 2L$ for some $L>0$. 

Let $0<r<R$ and $g(x)=2L|(x-q)/r|^{\alpha}$ for $x \in \mathbb{R}$. 

Define $h\colon (q-R,q+R)\to \mathbb{R}^{+}$ by
\[h(x)=\min\{f(x),g(x)\}.\]
Then for any interval $I\subset (q-R,q+R)$ we have,
\begin{equation}\label{localizedAp}
\left(\dashint_{I}h\right) \left( \dashint_{I} h^{1/(1-p)} \right)^{p-1} \leq C
\end{equation}
where the constant $C>0$ depends only on $\alpha$ and $p$.
\end{lemma}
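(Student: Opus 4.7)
The plan is to reduce the statement, via changes of variable that leave the $A_p$-quantity invariant, to a canonical form in which the weight is exactly $|x|^\alpha$, and then exploit Lemma \ref{lem:xalpha}. First I would translate $x \mapsto x-q$ and rescale $x \mapsto x/r$; both moves preserve \eqref{localizedAp} because it is defined through averages. Multiplying the weight by the constant $c=1/(2L)$ is also harmless: the factor $c$ coming from $\dashint_I cw$ cancels against $c^{-1}$ coming from $(\dashint_I (cw)^{1/(1-p)})^{p-1}$. After these reductions we may assume $h(x)=\min\{\tilde f(x),|x|^\alpha\}$ with $\tilde f$ taking values in $[1/4,1]$, and $I$ an arbitrary subinterval of the real line.

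The next step is to compare $h$ to the $\tilde f$-free surrogate $\bar h(x):=\min\{1,|x|^\alpha\}$. A direct pointwise check gives $\tfrac14 \bar h\le h\le \bar h$, using only $\tilde f\in[1/4,1]$ and the elementary inequality $\min\{1/4,t\}\ge \tfrac14\min\{1,t\}$ for $t\ge 0$. This two-sided squeeze shows that the quantity in \eqref{localizedAp} for $h$ agrees with that for $\bar h$ up to a constant factor depending only on $p$, so it is enough to bound it uniformly for $\bar h$.

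For $\bar h$ the two decisive pointwise inequalities are $\bar h\le |x|^\alpha$ on one hand, and $\bar h^{1/(1-p)}=\max\{1,|x|^{\alpha/(1-p)}\}\le 1+|x|^{\alpha/(1-p)}$ on the other. Together with $\bar h\le 1$ they yield
\[\left(\dashint_I \bar h\right)\left(\dashint_I \bar h^{1/(1-p)}\right)^{p-1}\le \min\bigl\{1,\ \dashint_I |x|^\alpha\bigr\}\left(1+\dashint_I |x|^{\alpha/(1-p)}\right)^{p-1}.\]
Applying the elementary inequality $(a+b)^{p-1}\le C_p (a^{p-1}+b^{p-1})$ and distributing, the right hand side splits into two pieces: the cross term is dominated by $(\dashint_I |x|^\alpha)(\dashint_I |x|^{\alpha/(1-p)})^{p-1}$, which is bounded by Lemma \ref{lem:xalpha}, while the remaining term is bounded by $\min\{1,\dashint_I |x|^\alpha\}\le 1$.

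The only really delicate point is the choice of surrogate $\bar h$: the truncation must be bounded above so the first factor does not blow up on intervals that extend far into the region $|x|^\alpha\gg 1$, yet must still agree with $|x|^\alpha$ near the origin so that $\bar h^{1/(1-p)}$ inherits the delicate integrability at $0$ on which Lemma \ref{lem:xalpha} crucially depends (this is where the assumption $p>1+\alpha$ enters). The symmetric truncation $\min\{1,|x|^\alpha\}$ achieves both simultaneously, and $\tilde f$ intervenes only through the universal factor $4$.
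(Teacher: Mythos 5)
Your proof is correct, and it takes a genuinely different route from the paper's. The paper proves the lemma by a three-way case analysis on the interval $I$: when $|I|>r/8^{\beta}$ it bounds $\dashint_I h$ by $2L$ and estimates $\int_I h^{1/(1-p)}$ by splitting off $\int_{q-r}^{q+r} g^{1/(1-p)}\leq CL^{1/(1-p)}r\leq CL^{1/(1-p)}|I|$; when $|I|\leq r/8^{\beta}$ and $I\subset[q-r/4^{\beta},q+r/4^{\beta}]$ it observes $h=g$ on $I$ and quotes Lemma \ref{lem:xalpha}; and in the remaining case $h$ is comparable to $L$ on $I$ so the bound is immediate. You instead normalize away $q$, $r$ and $L$ (correctly noting that translation, dilation and multiplication by a constant leave the $A_p$ quantity invariant), sandwich the resulting weight between $\tfrac14\bar h$ and $\bar h$ with $\bar h=\min\{1,|x|^{\alpha}\}$, and verify directly that $\bar h$ is an $A_p$ weight by splitting $(1+\dashint_I|x|^{\alpha/(1-p)})^{p-1}$ and invoking Lemma \ref{lem:xalpha} for the cross term while using $\dashint_I\bar h\leq 1$ for the other. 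All the individual steps check out: the pointwise squeeze $\min\{1/4,t\}\geq\tfrac14\min\{1,t\}$ is valid, the factor $4$ propagates to a factor $4$ in the $A_p$ quantity because $1/(1-p)<0$, and the identity $\bar h^{1/(1-p)}=\max\{1,|x|^{\alpha/(1-p)}\}$ is right. Your argument buys a cleaner structure with no case analysis on $I$, and it isolates the two genuine inputs (that $|x|^{\alpha}\in A_p$, which is where $p>1+\alpha$ enters, and that $A_p$ is stable under two-sided bounded multiplicative perturbation); the paper's version is more hands-on but yields the same dependence of the constant on $\alpha$ and $p$ only.
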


\begin{proof}
Fix an interval $I=(a,b)\subset (q-R,q+R)$; we consider several cases depending on the length and position of $I$. 

Suppose $|b-a|>r/8^{\beta}$. We have the simple estimate
\begin{equation}\label{simpleestimate}
\dashint_{I} h \leq \dashint_{I} f \leq 2L.
\end{equation}
For the second term in \eqref{localizedAp} we use the bounds on $f$ and the fact that $h=f$ outside $(q-r,q+r)$ to see
\[\int_{I}h^{1/(1-p)} \leq \int_{q-r}^{q+r}g^{1/(1-p)} + CL^{1/(1-p)}|I|.\]
Using the fact $p>1+\alpha$ and $r<8^{\beta}|I|$ we can continue,
\begin{align*}
\int_{q-r}^{q+r} g^{1/(1-p)} &= (2L/r^{\alpha})^{1/(1-p)} \int_{0}^{r} |x|^{\alpha/(1-p)}\\
&\leq CL^{1/(1-p)}r\\
&\leq CL^{1/(1-p)}|I|.
\end{align*}
Thus we obtain
\[\left(\dashint_{I}h^{1/(1-p)}\right)^{p-1}\leq CL^{-1}\]
and, by combining this with \eqref{simpleestimate}, we obtain \eqref{localizedAp}.

Now suppose $|b-a|\leq r/8^{\beta}$ and $I\subset [q-(r/4^{\beta}),q+(r/4^{\beta})]$. Then $h=g$ on $I$ and \eqref{localizedAp} follows from Lemma \ref{lem:xalpha}.

Finally suppose $|b-a|\leq r/8^{\beta}$ and $I$ is not strictly contained in the interval $[q-(r/4^{\beta}),q+(r/4^{\beta})]$. This implies that $|x-q|\geq r/4^{\beta}- r/8^{\beta}$ for all $x \in I$; it follows that the values of $g$, and hence the values of $h$, on $I$ are comparable to $L$. In this case the validity of \eqref{localizedAp} is again clear.
\end{proof}

The next lemma will be used to estimate \eqref{Apinequality} for relatively large intervals; the idea is that $w_{k}$ and $w_{k-1}$ agree except on a relatively small interval.

\begin{lemma}\label{increase}
The following estimates hold for both $+$ and $-$:
	\item \[\int_{I_{k}} w_{k} \leq \varepsilon_{k} \int_{J_{k}^{\pm}}w_{k-1},\]
	\item \[\int_{I_{k}} w_{k}^{1/(1-p)} \leq \varepsilon_{k}\int_{J_{k}^{\pm}} w_{k-1}^{1/(1-p)}.\]
\end{lemma}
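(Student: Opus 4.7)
The plan is to exploit two pieces of structure. First, on $I_k$ the weight satisfies $w_k(x)\leq g_k(x)=2w_{k-1}(q_k)|(x-q_k)/r_k|^{\alpha}$, which is an explicit power function; second, on the enlarged neighborhood $(q_k-4R_k,q_k+4R_k)\supset I_k\cup J_k^{\pm}$ the function $w_{k-1}$ is pinched between $w_{k-1}(q_k)/2$ and $2w_{k-1}(q_k)$ by the choice of $R_k$. Each of the four integrals appearing in the statement is therefore comparable, up to universal constants, to an explicit expression in $w_{k-1}(q_k)$, $r_k$ and $R_k-r_k$, after which the constraints imposed when choosing $r_k$ deliver the factor $\varepsilon_k$.

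For the first inequality I would bound $\int_{I_k} w_k \leq \int_{I_k} g_k$ and compute the right-hand side by a direct change of variables, obtaining $4w_{k-1}(q_k)r_k/(\alpha+1)$. The pinch gives $\int_{J_k^{\pm}} w_{k-1}\geq \tfrac{1}{2}w_{k-1}(q_k)(R_k-r_k)$. The ratio is then at most $8r_k/((\alpha+1)(R_k-r_k))\leq 8r_k/(R_k-r_k)$, which is bounded by $\varepsilon_k$ thanks to the assumption $8r_k\leq \varepsilon_k(R_k-r_k)$.

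For the second inequality the main point is that $1/(1-p)<0$, so taking $\min$ becomes taking $\max$ under the exponent and on $I_k$ we have
\[w_k^{1/(1-p)} \;=\; \max\bigl\{w_{k-1}^{1/(1-p)},\,g_k^{1/(1-p)}\bigr\} \;\leq\; w_{k-1}^{1/(1-p)}+g_k^{1/(1-p)}.\]
The first summand integrates to at most $Cw_{k-1}(q_k)^{1/(1-p)}r_k$ by the pinch; the second, after substituting $u=(x-q_k)/r_k$, evaluates explicitly to $(2w_{k-1}(q_k))^{1/(1-p)}\cdot 2r_k(p-1)/(p-1-\alpha)$, finite precisely because $p>1+\alpha$ forces $\alpha/(1-p)>-1$. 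For the lower bound on $J_k^{\pm}$ I would use $w_{k-1}\leq 2w_{k-1}(q_k)$ to obtain $\int_{J_k^{\pm}} w_{k-1}^{1/(1-p)}\geq Cw_{k-1}(q_k)^{1/(1-p)}(R_k-r_k)$. After cancelling the common factor of $w_{k-1}(q_k)^{1/(1-p)}$, the ratio reduces to a constant multiple of $r_k/(R_k-r_k)$ with a coefficient controlled by $(p-1)/(p-1-\alpha)$, which is $\leq \varepsilon_k$ by the third constraint imposed on $r_k$.

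The main obstacle is purely the bookkeeping of constants needed to extract a clean factor of $\varepsilon_k$ from the three separate conditions imposed on $r_k$; the essential analytic input is the convergence of $\int_0^1 u^{\alpha/(1-p)}\dd u$, which uses $p>1+\alpha$ in a crucial way. Consistently with the construction, the second inequality would genuinely fail for $p\leq 1+\alpha$, where on the other hand the non-constant curves carry zero modulus.
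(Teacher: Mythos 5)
Your proposal is correct and follows essentially the same route as the paper: bound the integrals over $I_k$ using the explicit form of $g_k$ and the pinch $w_{k-1}(q_k)/2\leq w_{k-1}\leq 2w_{k-1}(q_k)$, bound the integrals over $J_k^{\pm}$ from below by the same pinch, and invoke the smallness conditions on $r_k$ relative to $R_k-r_k$; the key analytic point, convergence of $\int_0^1 u^{\alpha/(1-p)}\dd u$ from $p>1+\alpha$, is the same in both. Your rewriting of $w_k^{1/(1-p)}$ as a maximum is only a cosmetic variation on the paper's direct lower bound $w_k\gtrsim w_{k-1}(q_k)\lvert(x-q_k)/r_k\rvert^{\alpha}$ on $I_k$.
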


\begin{proof}
Let $L=w_{k-1}(q_k)$. For the first estimate we note,
	\[\int_{I_{k}} w_{k} \leq 2|I_{k}|w_{k-1}(q_k)=4r_{k}L\]
	and
	\[ \int_{J_{k}^{\pm}}w_{k-1}\geq L/2 (R_{k}-r_k)\]
	so the estimate holds since $R_{k}$ was chosen sufficiently large relative to $r_{k}$.
The argument for the second estimate is similar: we have, since $p>1+\alpha$,
	\[ \int_{I_k} w_k^{1/(1-p)} \leq \int_{-r_k}^{r_k} \left( \Bigl|\frac x {r_k} \Bigr|^{\alpha} L\right)^{1/(1-p)} = 2r_k L^{1/(1-p)} \frac { p-1}{p-1-\alpha}, \] 
	\[ \int_{J^{\pm}_k} w_{k-1}^{1/(1-p)} \geq (2L)^{1/(1-p)} (R_k-r_k) \]
	and again, since $R_k$ are sufficiently large relative to $r_k$, we get the conclusion.
\end{proof}

We now put together Lemma \ref{universal} and Lemma \ref{increase} to obtain the required control on inequality \eqref{Apinequality} for the weights $w_{k}$ used to construct $w$.

\begin{lemma}\label{control}
There exists a constant $C>0$, depending only on $p$ and $\alpha$, such that for all intervals $I$,
\[\left( \dashint_{I}w_{k}\right)\left( \dashint_{I}w_{k}^{1/(1-p)}\right)^{p-1}\leq \max \left\{ (1+\varepsilon_{k})^{p}\left( \dashint_{I}w_{k-1}\right)\left( \dashint_{I}w_{k-1}^{1/(1-p)}\right)^{p-1},C\right\}.\]
\end{lemma}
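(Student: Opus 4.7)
The plan is to dichotomize according to how $I$ sits relative to the two nested intervals $I_k \subset J_k$ that govern where $w_k$ differs from $w_{k-1}$. The ``small scale'' case $I \subset J_k$ will produce the universal constant $C$ via Lemma~\ref{universal}, and the ``large scale'' case $I \not\subset J_k$ will produce the multiplicative factor $(1+\varepsilon_k)^p$ via Lemma~\ref{increase}. Taking the maximum of the two bounds accommodates both outcomes.

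First I would treat $I \subset J_k$. The choice of $R_k$ guarantees $w_{k-1}(q_k)/2 \leq w_{k-1} \leq 2w_{k-1}(q_k)$ throughout $J_k$, so the restriction of $w_k = \min\{w_{k-1}, g_k\}$ to $J_k$ is exactly a function $h$ of the type considered in Lemma~\ref{universal}, with $L = w_{k-1}(q_k)$, $R = R_k$, $r = r_k$, and $f = w_{k-1}|_{J_k}$. That lemma delivers the $A_p$ estimate with the constant $C = C(p,\alpha)$ on $I$ immediately, independently of the precise form of $w_{k-1}$.

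For $I \not\subset J_k$, the trivial sub-case $I \cap I_k = \emptyset$ gives $w_k \equiv w_{k-1}$ on $I$. Otherwise $I$ meets $I_k$ but exits $J_k$ on at least one side; since $I_k$ is strictly interior to $J_k$ and $I$ is a connected interval, this forces $I$ to contain $J_k^+$ or $J_k^-$ in its entirety. Assuming $J_k^+ \subset I$ (the other case is symmetric) I would split $\int_I w_k = \int_{I \setminus I_k} w_{k-1} + \int_{I \cap I_k} w_k$ and use the first estimate of Lemma~\ref{increase}, together with $J_k^+ \subset I \setminus I_k$, to obtain $\int_I w_k \leq (1+\varepsilon_k) \int_I w_{k-1}$. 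The parallel argument using the second estimate of Lemma~\ref{increase} gives $\int_I w_k^{1/(1-p)} \leq (1+\varepsilon_k) \int_I w_{k-1}^{1/(1-p)}$. Multiplying these two inequalities and dividing by $|I|^p$ so that the averages appear produces the $(1+\varepsilon_k)^p$ bound that constitutes the second entry in the maximum.

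The only subtle point I see is the combinatorial observation in the second case that the interval $I$ must entirely contain at least one of the gaps $J_k^\pm$; this is where the hypothesis $I_k \Subset J_k$ is essential, and where I would be most careful. Beyond that, the argument is pure bookkeeping: all the analytic content has already been packaged into Lemmas~\ref{universal} and \ref{increase}, and no new estimates on the weights themselves are needed.
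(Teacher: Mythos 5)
Your proof is correct and follows essentially the same route as the paper: small intervals are handled by Lemma \ref{universal} (giving the universal constant $C$) and large intervals by Lemma \ref{increase} (giving the factor $(1+\varepsilon_k)^p$). The only difference is the precise case split --- you use $I\subset J_k$ versus $I\not\subset J_k$, while the paper uses $|I|\leq |J_k|$ versus $|I|>|J_k|$ and applies Lemma \ref{universal} with $R=4R_k$ in the small case --- and your version makes the claim that $I$ contains one of $J_k^{\pm}$ in the large case slightly cleaner.
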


\begin{proof}
We clearly can assume $I\cap I_{k}\neq \varnothing$ since $w_{k}=w_{k-1}$ outside $I_{k}$. First suppose $|I|>|J_{k}|$ so that (without loss of generality) $J_{k}^{+} \subset I$. Using Lemma \ref{increase} we can estimate,
\begin{align*}
\dashint_{I} w_{k} &= \frac{1}{|I|}\left( \int_{I_{k}}w_{k}+\int_{I\setminus I_{k}}w_{k}\right)\\
&\leq \frac{1}{|I|}\left(\varepsilon_{k}\int_{J_{k}^{+}}w_{k-1}+\int_{I\setminus I_{k}}w_{k-1}\right)\\
&\leq \frac{1}{|I|}\left(\varepsilon_{k}\int_{I}w_{k-1}+\int_{I}w_{k-1}\right)\\
&=(1+\varepsilon_{k})\dashint_{I}w_{k-1}.
\end{align*}
One obtains the estimate 
\[\left(\dashint_{I}w_{k}^{1/(1-p)}\right)^{p-1}\leq (1+\varepsilon_{k})^{p-1}\left(\dashint_{I}w_{k-1}^{1/(1-p)}\right)^{p-1}\]
in exactly the same way. Hence we obtain the desired inequality for this interval $I$.

Next we suppose $|I|\leq |J_{k}|$ so that $I\subset (q_k -4R_k,q_k+4R_k)$. Then, from the construction of $w_{k}$, we have
\[w_{k-1}(q_{k})/2\leq w_{k-1}(x)\leq 2w_{k-1}(q_{k})\]
whenever $|x-q_{k}|\leq 4R_{k}$. By applying Lemma \ref{universal} with $q=q_{k}$, $R=4R_{k}$, $f=w_{k-1}$, $L=w_{k-1}(q_{k})$, $r=r_{k}$ and $g=g_{k}$ we obtain 
\[\left( \dashint_{I}w_{k}\right)\left( \dashint_{I}w_{k}^{1/(1-p)}\right)^{p-1} \leq C\]
with constant $C$ depending only on $p$ and $\alpha$. This proves the claimed inequality.
\end{proof}

By iterating Lemma \ref{control} we can easily show that $w$ is an $A_{p}$-weight on $\mathbb{R}$; combining this with the lattice property of $A_{p}$-weights will then show that $\widehat{w}$ is an $A_{p}$-weight on $\mathbb{R}^{n}$.

\begin{proposition}\label{Apweightprop}
If $p>1+\alpha$ then $\widehat{w}$ is an $A_{p}$-weight on $\mathbb{R}^{n}$.
\end{proposition}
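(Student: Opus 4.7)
The plan is two-stage: first establish that the one-dimensional weight $w$ is an $A_p$-weight on $\mathbb{R}$ by iterating Lemma \ref{control}, then lift this to $\widehat{w}$ on $\mathbb{R}^n$ using the product structure and the lattice property of $A_p$-weights.

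For the first stage, set
\[A_k(I) = \left(\dashint_I w_k\right)\left(\dashint_I w_k^{1/(1-p)}\right)^{p-1}.\]
Since $w_1\equiv 1$, we have $A_1(I)=1$ for every interval $I$. The iterative estimate in Lemma \ref{control} gives $A_k(I)\leq \max\{(1+\varepsilon_k)^p A_{k-1}(I),C\}$, and a straightforward induction shows
\[A_k(I)\leq C\prod_{j=1}^{k}(1+\varepsilon_j)^p.\]
Since $\prod_{j=1}^{\infty}(1+\varepsilon_j)<\infty$ by our choice of the sequence $(\varepsilon_k)$, there is a constant $C'$ depending only on $p$ and $\alpha$ with $A_k(I)\leq C'$ uniformly in $k$ and $I$.

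To pass to the limit, observe that $w_k\downarrow w$ pointwise; since $1/(1-p)<0$, simultaneously $w_k^{1/(1-p)}\uparrow w^{1/(1-p)}$. Monotone convergence applied to each factor separately yields $\dashint_I w_k\to \dashint_I w$ and $\dashint_I w_k^{1/(1-p)}\to \dashint_I w^{1/(1-p)}$ (finite, because the uniform bound $A_k(I)\leq C'$ together with $\dashint_I w>0$ forces the second factor to stay bounded). Passing to the limit in $A_k(I)\leq C'$ gives the $A_p$-inequality for $w$ on every interval, so $w$ is an $A_p$-weight on $\mathbb{R}$.

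For the second stage, each map $x\mapsto w(x_i)$ on $\mathbb{R}^n$ is itself an $A_p$-weight: reducing from balls to cubes (which only costs a dimensional constant) and applying Fubini reduces the $n$-dimensional $A_p$-integral on any product cube to the one-dimensional $A_p$-integral of $w$ on the corresponding interval in the $i$-th factor, which is already controlled. The lattice property of $A_p$-weights from \cite{KKM} states that the pointwise minimum of finitely many $A_p$-weights is again an $A_p$-weight, and applying this to the $n$ weights $w(x_1),\dots,w(x_n)$ gives exactly that $\widehat{w}=\min_i w(x_i)$ is an $A_p$-weight on $\mathbb{R}^n$.

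The main work lies in the iteration and the limit argument on $\mathbb{R}$; once that is done, the step to $\mathbb{R}^n$ is essentially a citation. The only subtle point to watch is the passage to the limit, where one must ensure the decreasing sequence $w_k$ and the increasing sequence $w_k^{1/(1-p)}$ both have finite integrals in the limit—this is precisely what the uniform bound $A_k(I)\leq C'$ guarantees.
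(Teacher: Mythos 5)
Your proof is correct and follows essentially the same route as the paper: iterate Lemma \ref{control} to obtain a uniform bound on the $A_p$-quotients of the $w_k$, pass to the limit by monotone convergence, then reduce to cubes and invoke the lattice property of $A_p$-weights from \cite{KKM} to conclude for $\widehat{w}$. You merely spell out the induction and the limit argument in a little more detail than the paper does, which is fine.
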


\begin{proof}
By repeated application of Lemma \ref{control} and the fact $\varepsilon_{k}$ can be chosen small we deduce
\[\left( \dashint_{I}w_{k}\right)\left( \dashint_{I}w_{k}^{1/(1-p)}\right)^{p-1}\]
is bounded uniformly in $k$ and $I$. Using the monotone convergence theorem we deduce that
\[\left( \dashint_{I}w\right)\left( \dashint_{I}w^{1/(1-p)}\right)^{p-1}\]
is bounded uniformly in $I$. This shows that $w$ is an $A_{p}$-weight on $\mathbb{R}$.

We now observe that
\[x=(x_{1},\dots,x_{n})\mapsto \eta_{i}(x):=w(x_{i})\]
is an $A_{p}$-weight on $\mathbb{R}^{n}$ for each $1\leq i\leq n$. Indeed; we may use cubes instead of Euclidean balls in the left hand side of \eqref{Apinequality} and then the left hand side of \eqref{Apinequality}, corresponding to the weight $\eta_{i}$, reduces to the corresponding expression for the weight $w$ on $\mathbb{R}$. Such an expression is obviously bounded since $w$ is an $A_{p}$-weight on $\mathbb{R}$. 

By Proposition 4.3 \cite{KKM} the minimum of a finite collection of $A_{p}$-weights is again an $A_{p}$-weight; hence $\widehat{w}=\min \{\eta_{1},\dots,\eta_{n} \}$ is an $A_{p}$-weight.
\end{proof}

Taken together, Proposition \ref{trivialmodulusprop} and Proposition \ref{Apweightprop} prove Theorem \ref{mainthm}.

\section{Characterization of the weak gradient on $\mathbb{R}$}

Let $\mu$ be a locally finite Borel measure on $\mathbb{R}$. We give a characterization of the $p$-weak gradient for Lipschitz functions defined on $(\mathbb{R},|\cdot|,\mu)$. The idea is that integrability properties of the absolutely continuous part of $\mu$ give information about which intervals (considered as curves) have non trivial $p$-modulus; these intervals then determine the $p$-weak gradient. A similar characterization has been found in \cite{bjbj}, for measures $\mu$ whose absolutely continuous part with respect to Lebesgue measure is bounded by below by a constant, and a weaker result is stated in \cite{Bog}, Theorem 2.6.4, where the author characterize the measures for which the $p$-weak gradient is $|f'|$ for every $f \in C^{\infty}$ (which is equivalent to the closability of the Sobolev norm he considers).

It is worth noticing that, at least when $p=2$, a very similar question has been investigated by some authors in the calculus of variations, posed as a semicontinuity problem; in \cite{Marcell, FusMos} they found exactly the same answer that we find.

Throughout this section we fix $p>1$ and let $q$ be the corresponding H\"older conjugate so that $p^{-1}+q^{-1}=1$. Given a compact interval $I\subset \mathbb{R}$ we define the corresponding curve $\gamma_{I}\colon I \to \mathbb{R}$ by $\gamma_{I}(t)=t$. Denote the Lebesgue decomposition of $\mu$ by $\mu = \mu_a + \mu_s$. Let $\mu_a=f_a \mathcal{L}^1$ with $f_{a}\colon \mathbb{R} \to \mathbb{R}$ a Borel function and fix a Lebesgue null set $N\subset \mathbb{R}$ on which $\mu_s$ is concentrated.

\begin{lemma}\label{lem:modloc} For any interval $[a,b]\subset \mathbb{R}$ we have ${\rm Mod}_{p,\smu} \bigl( \{\gamma_{[a,b]}\} \bigr) >0 $ if and only if $f_a^{1/(1-p)}$ is Lebesgue integrable on $[a,b]$.
\end{lemma}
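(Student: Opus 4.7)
The plan is to reduce $\mathrm{Mod}_{p,\smu}(\{\gamma_{[a,b]}\})$ to a one-dimensional $L^{p}(f_a\,d\mathcal{L}^1)$-optimisation problem on $[a,b]$ and then recognise the answer as the classical H\"older/duality identity.

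First I would observe that, because $\mu_s$ is concentrated on the Lebesgue-null set $N$, any admissible $g$ (i.e.\ Borel with $\int_{\gamma_{[a,b]}} g = \int_a^b g\,dt \geq 1$) may be replaced by $g\mathbf{1}_{\mathbb{R}\setminus N}$ without destroying admissibility, and this replacement kills the singular contribution $\int g^p\,d\mu_s$. Hence
\[
\mathrm{Mod}_{p,\smu}\bigl(\{\gamma_{[a,b]}\}\bigr) = \inf\Bigl\{\int_a^b g^p f_a\,d\mathcal{L}^1 : g\geq 0\ \text{Borel},\ \int_a^b g\,d\mathcal{L}^1 \geq 1\Bigr\}.
\]
Write $A := \int_a^b f_a^{1/(1-p)}\,d\mathcal{L}^1\in(0,\infty]$, adopting the convention $f_a^{1/(1-p)}=+\infty$ on $\{f_a=0\}$ (consistent since $1/(1-p)<0$). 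Since $\mu$ is locally finite, $f_a<\infty$ a.e., whence $A>0$ always; so the lemma amounts to showing the above infimum is positive iff $A<\infty$.

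For the direction $A<\infty$, I would apply H\"older's inequality with conjugate exponents $p$ and $q=p/(p-1)$ via the splitting $g=(g f_a^{1/p})\cdot f_a^{-1/p}$; the identity $-q/p=1/(1-p)$ then gives
\[
1\leq \int_a^b g\,dt \leq \Bigl(\int_a^b g^p f_a\,dt\Bigr)^{1/p} A^{1/q},
\]
so $\int_a^b g^p f_a\,dt \geq A^{1-p}>0$ uniformly in admissible $g$. (The test function $g = A^{-1}f_a^{1/(1-p)}$ realises equality, so the modulus is in fact exactly $A^{1-p}$.) For the direction $A=\infty$, I would exhibit admissible $g_n$ with vanishing energy. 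Set $f_n:=\min(f_a^{1/(1-p)},n)$ and $c_n:=\int_a^b f_n\,d\mathcal{L}^1$; monotone convergence gives $c_n\uparrow A=\infty$. Taking $g_n:=f_n/c_n$, the pointwise bound $f_n\leq f_a^{1/(1-p)}$ (on $\{f_a>0\}$, trivially on $\{f_a=0\}$) yields $f_n^{p-1}f_a\leq 1$ and hence $f_n^p f_a\leq f_n$, so that
\[
\int_a^b g_n^p f_a\,dt \leq c_n^{-p}\int_a^b f_n\,dt = c_n^{1-p}\longrightarrow 0.
\]

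The only mildly delicate point is keeping bookkeeping clean around the convention $f_a^{1/(1-p)}=+\infty$ on $\{f_a=0\}$: both the H\"older lower bound (the integrand $g f_a^{1/p}\cdot f_a^{-1/p}$ still makes sense, with $g^p f_a$ vanishing and $f_a^{1/(1-p)}$ infinite there) and the construction of $g_n$ (since $f_n$ is capped at $n$) remain meaningful in this case. Beyond that, the argument is just the standard $L^p$\,--\,$L^q$ duality, so I do not anticipate any essentially hard step.
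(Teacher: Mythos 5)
Your proof is correct and follows essentially the same route as the paper: H\"older's inequality with the splitting $g=(gf_a^{1/p})f_a^{-1/p}$ for the lower bound, and truncated test functions vanishing on $N$ for the upper bound (your $\min\{f_a^{1/(1-p)},n\}$ is literally the paper's $\max\{f_a,\varepsilon\}^{1/(1-p)}$ with $n=\varepsilon^{1/(1-p)}$). The only cosmetic difference is that you normalize the admissible functions directly rather than invoking the homogeneous quotient formulation of the modulus.
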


\begin{proof} This lemma is an easy corollary of Theorem $5.1$ in \cite{ADMS}; however we want to give here a self-contained and more elementary proof since $\Gamma$ consists of only one curve. If $a=b$ the statement is trivial so we assume $a<b$.

We write an equivalent definition for ${\rm Mod}_{p,\smu}$, using the homogeneity of the problem (see \cite{ADMS}):
\begin{equation}\label{eqn:equiv}
{\rm Mod}_{p,\smu} ( \{\gamma_{[a,b]}\} )^{1/p} = \inf \left\{ \frac { \| g \|_{L^p(\mu)} }{  \int_a^b g(x) \dd x} \right\},
\end{equation}
where the infimum is taken over all Borel functions $g$ which are $p$-integrable with respect to $\mu$ (this set is non empty since $\mu$ is locally finite).

Let $g\colon \mathbb{R} \to \mathbb{R}$ be any Borel function. From H\"{o}lder's inequality we have
\begin{equation}\label{eqn:ineq}
\int_a^b g(x) \dd x \leq \left( \int_a^b g^p(x) f_{a}(x)  \dd x \right)^{1/p}\left(  \int_a^b f_{a}(x)^{1/(1-p)}  \dd x \right)^{1/q}.
\end{equation}
Now, if $f_a^{1/(1-p)}$ is $\mathcal{L}^1$ integrable on $[a,b]$, by using inequality \eqref{eqn:ineq} in \eqref{eqn:equiv} we get that
$${\rm Mod}_{p,\smu} ( \{\gamma_{[a,b]}\} )^{1/p} \geq \inf \left\{ \frac { \| g \|_{L^p(\mu_a)} }{  \int_a^b g(x) \dd x} \right\}  \geq \frac 1{ \| f_a^{1/(1-p)} \|^{1/q}_{L^1(\mathcal{L}^1)} } > 0 .$$
If otherwise $f_a^{1/(1-p)}$ is not integrable then, letting $f_{\varepsilon} =\max \{f_a,\varepsilon\}$, we use
$$g(x)=\begin{cases} 0 & \text{ if }x \in N \cup (\mathbb{R}\setminus [a,b]) \\ f_{\varepsilon}^{1/(1-p)}(x) & \text{ otherwise} \end{cases}$$
as a test function in $\eqref{eqn:equiv}$ and using $\mu_a \leq f_{\varepsilon} \mathcal{L}^1$ we get
$$ {\rm Mod}_{p,\smu} (\{ \gamma_{[a,b]} \})^{1/p} \leq  \left( \int_a^b f_{\varepsilon}^{1/(1-p)}(x) \dd x \right) ^{-1/q}.$$
Letting $\varepsilon \to 0$ we obtain, by monotone convergence, that ${\rm Mod}_{p,\smu}(\{ \gamma_{[a,b]} \})=0$.
\end{proof}

\begin{theorem}\label{thm:char}Let
 \begin{equation}
  \mathcal{N}_p =  \left\{ x \in \mathbb{R} \text{ such that } f_a^{1/(1-p)} \text{ is integrable on a neighbourhood of }x \right\}.
 \end{equation}
Let $f\colon \mathbb{R} \to \mathbb{R}$ be Lipschitz and define, for $\mu$ almost every $x$,
 \begin{equation}\label{eqn:dfnwgrad}
  	| df |_{p,\smu} (x) = \begin{cases} | f'(x) | & \text{ if }x \in \mathcal{N}_p \setminus N \\
  	                     0 & \text{otherwise.} \end{cases}
 \end{equation}
 Then $| \nabla f|_{p,\smu}(x) = | df |_{p,\smu}(x)$ for $\mu$-almost every $x$.
\end{theorem}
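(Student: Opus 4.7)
My plan is to prove the equality $|\nabla f|_{p,\smu}(x) = |df|_{p,\smu}(x)$ $\mu$-a.e.\ via the two inequalities separately, after fixing a Borel representative of $f'$ (so that $|df|_{p,\smu}$ is a genuine Borel function, since $\mathcal{N}_p$ is open and $N$ is Borel).

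For the inequality $|\nabla f|_{p,\smu} \leq |df|_{p,\smu}$ I will show that $|df|_{p,\smu}$ itself is a $p$-upper gradient of $f$. Since $f$ is Lipschitz and $\gamma$ is absolutely continuous, the chain rule gives $|f(b_\gamma) - f(a_\gamma)| \leq \int_\gamma |f'| \dd s$ for every absolutely continuous $\gamma$; the two integrands $|f'|$ and $|df|_{p,\smu}$ differ only on $\mathcal{N}_p^c \cup N$, so the task reduces to showing $\int_\gamma \mathbf{1}_{\mathcal{N}_p^c \cup N} \dd s = 0$ for $p$-a.e.\ $\gamma$. The $N$-part vanishes for every absolutely continuous $\gamma$ by the one-dimensional area formula $\int_\gamma \mathbf{1}_N \dd s = \int \mathbf{1}_N(t) N_\gamma(t) \dd t = 0$. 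For the $\mathcal{N}_p^c$-part, I will enumerate the countable collection $\mathcal{R}$ of compact intervals $[a,b]$ with rational endpoints on which $f_a^{1/(1-p)}$ is not $\mathcal{L}^1$-integrable; Lemma \ref{lem:modloc} gives $\mathrm{Mod}_{p,\smu}(\{\gamma_{[a,b]}\}) = 0$ for each such $[a,b]$. The key point is that any admissible $g$ for $\{\gamma_{[a,b]}\}$ is automatically admissible for the larger family $\mathcal{F}_{[a,b]}$ of all absolutely continuous curves whose image contains $[a,b]$, because
\[ \int_\gamma g \dd s = \int g(t) N_\gamma(t) \dd t \geq \int_a^b g(t) \dd t, \]
so $\mathrm{Mod}_{p,\smu}(\mathcal{F}_{[a,b]}) = 0$ as well; countable subadditivity then makes $\Gamma_{\mathrm{bad}} := \bigcup_{[a,b] \in \mathcal{R}} \mathcal{F}_{[a,b]}$ a modulus-zero family. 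For $\gamma \notin \Gamma_{\mathrm{bad}}$, density of $\mathbb{Q}$ forces $(\min \gamma, \max \gamma) \cap \mathcal{N}_p^c = \emptyset$, so $\gamma^{-1}(\mathcal{N}_p^c) \subseteq \gamma^{-1}(\{\min \gamma, \max \gamma\})$; the metric speed of an absolutely continuous function vanishes at a.e.\ point of a level set, delivering $\int_\gamma \mathbf{1}_{\mathcal{N}_p^c} \dd s = 0$.

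For the opposite inequality, let $g$ be any $p$-upper gradient of $f$. For $x \in \mathcal{N}_p$ there is an open neighbourhood on which $f_a^{1/(1-p)}$ is $\mathcal{L}^1$-integrable, so all sufficiently small intervals $[x-\varepsilon, x+\varepsilon]$ lie inside it and $\mathrm{Mod}_{p,\smu}(\{\gamma_{[x-\varepsilon, x+\varepsilon]}\}) > 0$ by Lemma \ref{lem:modloc}. A singleton of strictly positive modulus cannot be absorbed into any modulus-zero exceptional family (by monotonicity of the outer measure), so the upper gradient inequality must hold for this specific curve:
\[ |f(x+\varepsilon) - f(x-\varepsilon)| \leq \int_{x-\varepsilon}^{x+\varepsilon} g(t) \dd t. \]
Dividing by $2\varepsilon$ and sending $\varepsilon \to 0$ at a common $\mathcal{L}^1$-Lebesgue point of $f'$ and $g$ gives $|f'(x)| \leq g(x)$ for $\mathcal{L}^1$-a.e.\ $x \in \mathcal{N}_p$. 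Since the measure $\mu$, restricted to $\mathcal{N}_p \setminus N$, is absolutely continuous with respect to $\mathcal{L}^1$, this promotes to $|f'(x)| \leq g(x)$ for $\mu$-a.e.\ $x \in \mathcal{N}_p \setminus N$, while on the complement $|df|_{p,\smu}$ vanishes by definition and the inequality is trivial; taking $g = |\nabla f|_{p,\smu}$ closes the argument.

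The step I expect to require the most care is the transfer of the single-curve modulus-zero conclusion of Lemma \ref{lem:modloc} to the full family $\mathcal{F}_{[a,b]}$, which rests on the simple multiplicity estimate $\int_\gamma g \dd s \geq \int_a^b g \dd t$; this is the sole place where one leaves the world of the straight-line curves considered in Lemma \ref{lem:modloc}. A second technicality is the handling of the two endpoints of the image of $\gamma$, where one invokes the vanishing of the derivative of an absolutely continuous function on its level sets.
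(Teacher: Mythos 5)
Your proof is correct and follows essentially the same route as the paper: both directions hinge on Lemma \ref{lem:modloc}, with a Lebesgue-point argument along the curves $\gamma_{[x-\varepsilon,x+\varepsilon]}$ for the inequality $|df|_{p,\mu}\leq|\nabla f|_{p,\mu}$, and the exhibition of a $\mathrm{Mod}_{p,\mu}$-null family of ``bad'' curves for the converse. The only real variation is how you kill that family: you use countable subadditivity over rational intervals together with the multiplicity bound $\int_\gamma g\dd s\geq\int_a^b g\dd t$, whereas the paper builds a single function $\sum_n a_n f_n\in L^p(\mu)$, non-locally-integrable at each point of a dense subset of $\mathcal{N}_p^c$; both work equally well. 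One point worth making explicit: the Lebesgue-point step requires $g$ to be locally $\mathcal{L}^1$-integrable (a nonnegative, a.e.\ finite but nowhere locally integrable upper gradient would defeat it), so, as the paper does by restricting to \emph{bounded} upper gradients, you should first replace $|\nabla f|_{p,\mu}$ by a Borel representative bounded by $\mathrm{Lip}(f)$ --- still an upper gradient, since a modification on a $\mu$-null set changes the curve integrals only on a $\mathrm{Mod}_{p,\mu}$-null family.
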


\begin{proof} We first note that equation \eqref{eqn:dfnwgrad} makes sense because $f'$ exists $\mathcal{L}^1$-almost everywhere, by Rademacher theorem, and so it exists also $\mu_a$-almost everywhere; hence $f'$ exists $\mu$-almost everywhere in the complement of $N$. We note that, thanks to Lemma \ref{lem:modloc}, we have the following equivalent definition for $\mathcal{N}_p$:
\begin{equation}\label{eqn:defnp}
  \mathcal{N}_p = \bigcup_{ \varepsilon > 0 } \left\{ x \in \mathbb{R} \text{ such that } {\rm Mod}_{p,\smu}\bigl( \{ \gamma_{[x-\varepsilon,x+\varepsilon]} \}\bigr) > 0 \right\}.
\end{equation}
Denote by $B$ the set of points where $f$ is not differentiable. Set 
$$G_f = \{ g: \mathbb{R} \to [0,\infty) \text{ bounded Borel function} \; : \; g(x) \geq | f'(x)| \text{ for $\mathcal{L}^1$-a.e. }x \in \mathcal{N}_p \}.$$
We will prove that $G_f$ is exactly the set of bounded $p$-upper gradients for $f$. This implies the theorem: indeed, $|df|_{p,\smu} \in G_f$ and for any $g \in G_f$ we have that $g(x) \geq |df|_{p,\smu}(x)$ for $\mu$ almost every $x \in \mathbb{R}$.\\

\textbf{ Step 1.} $g$ a bounded $p$-upper gradient $\Longrightarrow$ $g \in G_f$. \\ Let $D_p$ be the set of Lebesgue points of $g$ with respect to the Lebesgue measure. Since $g$ is a bounded Borel function, we know that $\mathcal{L}^1(D_p^c)=0$. Now take a point $x \in \mathcal{N}_p \cap D_p \setminus (B \cup N) $. Thus there exists $\varepsilon$ such that ${\rm Mod}_{p,\smu}\bigl(\{ \gamma_{[x-\varepsilon,x+ \varepsilon]}\} \bigr) > 0$; but then ${\rm Mod}_{p,\smu}\bigl( \{\gamma_{[x-\delta,x+\delta]}\} \bigr) > 0$ for every $0<\delta\leq\varepsilon$. This, together with the definition of the $p$-upper gradient, gives us that
$$ |f(x+\delta)-f(x-\delta)| \leq \int_{x-\delta}^{x+\delta} | \nabla f |_{p, \smu}(s) \dd s,$$
and so, passing to the limit when $\delta \to 0$, we get that $|f'(x)| \leq g(x)$, and so the thesis.\\

\textbf{ Step 2.} $g \in G_f$ $\Longrightarrow$ $g$ is a $p$-upper gradient. \\ To prove this implication we first show that
$$\Gamma=\{ \gamma \; : \;\gamma \text{ has end points }a<b,\, (a,b) \cap \mathcal{N}_p^c \neq \emptyset \}$$
is ${\rm Mod}_{p,\smu}$-null. Let $\mathcal{B}_p=\mathcal{N}_p^c$.
% We will prove that the set of curves $(a,b)$ such that $(a,b) \cap \mathcal {B}_p \neq \emptyset $ is ${\rm Mod}_{p,\smu}$-null.
First let $\{x_n\}_{n \in \mathbb{N}} \subset \mathcal{B}_p$ be a set of points dense in $\mathcal{B}_p$. From the definition of $\mathcal{N}_p$ we know that for every $n$ there exists a non negative function $f_n \in L^p(\mathbb{R}, \mu)$ such that $f_n$ is not locally Lebesgue integrable at $x_n$, that is:
\begin{equation}\label{eqn:epsi}
\int_{x_n- \varepsilon}^{x_n+\varepsilon} f_n(s) \dd s = \infty \qquad \forall \varepsilon >0.
\end{equation}
Now we take $f= \sum_n a_n f_n$ where the $a_n$ are positive real numbers small enough so that $f$ belongs to $L^p(\mathbb{R},\mu)$. For every curve $\gamma \in \Gamma$ with end points $a<b$ we have that $x_n \in (a,b)$ for some $n$ (since $\{x_{n}\}_{n \in \mathbb{N}}$ were dense in $\mathcal{B}_p$) and so we have that $[x_n- \varepsilon,x_n+ \varepsilon] \subset (a,b)$ for $\varepsilon>0$ small enough. In particular, using \eqref{eqn:epsi},
$$\int_{\gamma} f \geq \int_a^b f(s) \dd s \geq a_n \int_a^b f_n(s) \dd s \geq a_n \int_{x_n- \varepsilon}^{x_n+ \varepsilon} f_n(s) \dd s = \infty $$
and so ${\rm Mod}_{p,\smu}(\Gamma)=0$.\\
Suppose $g\in G_{f}$ and $\gamma \notin \Gamma$ has end points $a<b$. Then $(a,b)\subset \mathcal{N}_{p}$ and hence,
$$|f(a) - f(b)| \leq \int_{a}^{b} |f'(x)| \dd x \leq \int_{a}^{b} g(x) \dd x \leq \int_{\gamma} g.$$
Thus the set of curves where the upper gradient property fails is a $p$ negligible set; therefore $g$ is a $p$-upper gradient of $f$.\\

\end{proof}

\begin{remark}
It seems that one can generalize the observations in section 5 about weak gradients on $\mathbb{R}$ to analogous statements about $\mathbb{R}^{n}$; the statement here should be that the weak gradient at a point is the restriction to a subspace (depending on the point and the measure) of the ordinary derivative. This generalization involves the equivalent definition of weak gradient from \cite{Che99} as an integrand whose integral represents the Cheeger energy. The Cheeger energy is a functional obtained by relaxing the integral of the slope using convergence of Lipschitz functions; the paper \cite{Bou} provides integral representations of many such functionals. Unfortunately, when $n>1$, apart from peculiar cases, it is not possible to give a concrete description of the subspaces but a rather abstract one.
\end{remark}

\end{document}